\documentclass[letterpaper, 10 pt, conference]{ieeeconf}  

\IEEEoverridecommandlockouts                              

\overrideIEEEmargins                                      




\usepackage{cite}
\usepackage{amsmath,amssymb,amsfonts}
\usepackage{graphicx}
\usepackage{caption,subcaption}
\usepackage{algorithm,algorithmicx}
\usepackage{hyperref}
\usepackage{color}

\usepackage{mathtools}
\usepackage[nolist]{acronym}

\newtheorem{theorem}{Theorem}
\newtheorem{corollary}{Corollary}
\newtheorem{lemma}{Lemma}
\newtheorem{problem}{Problem}
\newtheorem{remark}{Remark}
\newtheorem{example}{Example}

\newtheorem{proposition}{Proposition}
\newcommand{\beq}{\begin{equation}}
\newcommand{\eeq}{\end{equation}}

\newcommand{\E}{\mathbb{E}}
\newcommand{\R}{\mathbb{R}}

\DeclarePairedDelimiter{\Tr}{\text{Tr}(}{)}
\newcommand{\st}{\mathop{s.t.}}

\usepackage{bm}
\usepackage{bbm}

\title{\LARGE \bf
Robust Data-Driven Receding-Horizon Control for LQR with Input Constraints
}

\author{Jian Zheng and Mario Sznaier
\thanks{This work has been submitted to IEEE L-CSS for possible publication.}%
\thanks{This work was partially supported by NSF grant CMMI 2208182, AFOSR grant FA9550-19-1-0005 and  ONR grant N00014-21-1-2431.}%
\thanks{J. Zheng and M. Sznaier are with the Robust Systems Lab,  ECE Department, Northeastern University, Boston, MA 02115.
        {\tt\small (e-mails: zheng.jian1@northeastern.edu, msznaier@coe.neu.edu)}}%
}

\begin{document}

\maketitle
\thispagestyle{empty}
\pagestyle{empty}

\begin{abstract}

This letter presents a robust data-driven receding-horizon control framework for the discrete time  linear quadratic regulator (LQR)  with input constraints. Unlike existing data-driven approaches that design a controller from initial data and apply it unchanged throughout the trajectory, our method exploits all available execution data in a receding-horizon manner, thereby capturing additional information about the unknown system and enabling less conservative performance. Prior data-driven LQR and model predictive control methods largely rely on Willem's fundamental lemma, which requires noise-free data, or use regularization to address disturbances, offering only practical stability guarantees. In contrast, the proposed approach extends semidefinite program formulations for the data-driven LQR to incorporate input constraints and leverages duality to provide formal robust stability guarantees. Simulation results demonstrate the effectiveness of the method.

\end{abstract}

\section{INTRODUCTION}

The \ac{LQR} is a benchmark problem in control theory, where the objective is to minimize an infinite-horizon quadratic cost for \ac{LTI} systems, with the optimal gain obtained from solving the algebraic Riccati equation \cite{khalil1996robust, anderson2007optimal}. A well-known extension is \ac{MPC}, which optimizes a finite-horizon cost while explicitly handling state and input constraints, making it widely adopted in practice \cite{rawlings2020model}. However, \ac{MPC} critically depends on accurate nominal models to predict future states and solve online optimization problems. Traditional approaches obtain such models through system identification followed by robust control synthesis \cite{formentin2014comparison}, but identification can be costly and introduce modeling errors that may degrade closed-loop performance \cite{dai2023data}. 

Recent advances in computation and data availability have sparked growing interest in \ac{DDC}, where controllers are designed directly from measured data without explicit identification \cite{dai2020semi, de2019formulas, zheng2023robust, zheng2024data}. Most existing \ac{DDC} methods design controllers offline from initial measurements and apply them unchanged throughout the trajectory. While the designed controllers guarantee stability over the region of interest, they may be conservative in performance, e.g., convergence speed, since they do not exploit online execution data, which obtain additional information about the unknown system \cite{zheng2025receding}. 

Several data-driven counterparts of \ac{LQR} and \ac{MPC} have been developed \cite{van2020data, dorfler2022role, coulson2019data, coulson2019regularized, berberich2020data, berberich2022combining}. Data-driven \ac{LQR} methods \cite{van2020data, dorfler2022role} suffer from the same limitation above and cannot accommodate constraints. In contrast, data-enabled predictive control (DeePC) \cite{coulson2019data, coulson2019regularized} and data-driven predictive control \cite{berberich2020data, berberich2022combining} can handle constraints and inherently operate in a receding-horizon (RH) fashion. However, they update only a fixed-length window of the most recent data, rather than using all available measurements, including initial and past executions, which again limits performance and introduces conservatism. Recent work has attempted to address this by updating the consistency set (i.e., the set of systems consistent with the data) along the trajectory and enforcing set-membership via duality to achieve robust stability \cite{zheng2025receding}. While effective, this approach yields a greedy controller that optimizes the one-step convergence rate in the worst case, limiting long-term performance.

A common feature of the methods above is their reliance on Willem's fundamental lemma \cite{willems2005note}, which requires persistently exciting and noise-free data, restricting its applicability to \ac{LTI} systems without disturbances. Robust variants introduce regularization \cite{berberich2020data} or low-rank approximation \cite{dorfler2022role} to address disturbances, but they require case-dependent weight tuning and provide only practical, rather than formal, guarantees of robust stability. To overcome these limitations, \ac{SDP}-based formulations of data-driven \ac{LQR} have recently been introduced \cite{dai2023data, hu2025robust}. The work in \cite{dai2023data} extends the \ac{SDP} framework for \ac{LQR} control \cite{feron1992numerical} and provides formal robustness guarantees against $\ell_\infty$-bounded noise, serving as the baseline for this paper, but it cannot handle input constraints. In contrast, \cite{hu2025robust} addresses finite-horizon \ac{MPC} with constraints by formulating a consistency set and enforcing set-membership through the S-lemma \cite{van2020noisy}. However, the consistency set is approximated by an outer ellipsoid, 
which introduces conservatism and may limit performance. 

Motivated by these challenges, this paper develops a robust data-driven receding-horizon control framework for \ac{LQR} problems with input constraints and $\ell_\infty$-bounded noise.  The proposed method extends prior \ac{SDP}-based approaches by explicitly incorporating input constraints and dynamically updating the consistency set using all available data, thereby tightening the worst-case \ac{LQR} performance bound. Unlike existing approaches, it provides formal robust stability guarantees while reducing conservatism. The main contributions are as follows:

\begin{enumerate}
\item A less conservative, computationally efficient  \ac{SDP}-based  receding-horizon \ac{LQR} framework capable of  explicitly handling input constraints through the introduction of a parameter $\tau$ that scales the control action while keeping it close to its bounds when needed.
    \item Extension of the framework to the data-driven case. 
    
    \item Substantial reduction in the computational complexity of this data-driven framework by exploiting duality.
\end{enumerate}

The rest of the paper is organized as follows. Section~\ref{sec:preliminaries} presents preliminaries and problem formulation. Section~\ref{sec:model-based} and \ref{sec:data-driven} propose the \ac{SDP}-based input-constrained \ac{LQR} control algorithms in the model-based and data-driven receding-horizon settings, respectively. 
Section \ref{sec:experiments} presents simulation results that demonstrate the effectiveness of the proposed method, and Section \ref{sec:conclusions} concludes the paper.

\section{PRELIMINARIES} \label{sec:preliminaries}

\begin{acronym}
\acro{DDC}{data-driven control}
\acro{LMI}{linear matrix inequality}
\acroplural{LMI}[LMIs]{linear matrix inequalities}
\acroindefinite{LMI}{an}{a}
\acro{LP}{linear program}
\acroindefinite{LP}{an}{a}
\acro{PSD}{positive semidefinite}
\acro{SDP}{semidefinite program}
\acroindefinite{SDP}{an}{a}
\acro{SOS}{sum-of-squares}
\acroindefinite{SOS}{an}{a}
\acro{LTI}{linear time-invariant}
\acro{LQR}{linear quadratic regulator}
\acro{MPC}{model predictive control}
\acro{P-satz}{Positivstellensatz}
\acro{RH}{receding horizon}
\end{acronym}

\subsection{Notations}
Let $\R$, $\R^n$ and $\R^{n\times m}$ denote the set of real numbers, $n$-dimensional real vectors and $n\times m$-dimensional real matrices. $\E[\cdot]$ and $\Tr{\cdot}$ denote the expectation and trace of a matrix. For a matrix $M$,  $M^T$ denotes its transpose, and the symbol $*$ denotes the symmetric counterpart of a block in a symmetric matrix. For a symmetric matrix $P$, the notation $P\succ 0(P\succeq 0)$ indicates that $P$ is positive definite (positive semidefinite). $I_n$ and $0_n$ denote the $n\times n$ identity and zero matrices, and $\mathbf{1}$ denotes all-ones vector of proper dimension. $\|\cdot\|_2$ and $\|\cdot\|_\infty$ denote the $\ell_2$- and $\ell_\infty$-norms of a vector. The set of real and \ac{SOS} polynomials in the indeterminate $x$ is denoted by $\R[x]$ and $\Sigma[x]$, respectively. The set of interior points of a set $\mathcal{S}$ is denoted by int$(\mathcal{S})$.

\subsection{Model-Based LQR}
Consider a discrete-time \ac{LTI} system 
\beq \nonumber
    x_{k+1} = A x_k + B u_k + w_k,
\eeq
where $x \in \mathbb{R}^n, u \in \mathbb{R}^m,$ and $w \in \mathbb{R}^n$ denote the state, input, and a stochastic disturbance, respectively. As standard, we will assume that $w_k$ are i.i.d. random variables with $\mathbb{E}(w_k)=0,\mathbb{E}(w_k w_k^T)=I_n$ and $\mathbb{E}(x_k w_k^T)=0_n$. The \ac{LQR} problem admits several equivalent formulations, either deterministic or stochastic. In this paper, we adopt  the following stochastic formulation \cite[Ch.~6]{kwakernaak1972linear}:  design a state-feedback controller that stabilizes the system while minimizing  the expected value of an average quadratic cost with state and input weight matrices $Q\succeq 0, R\succ 0$:
\begin{equation}\label{eq:stochastic_LQR}
    \min_{u} \quad \mathbb{E}_w \left \{
    \lim_{N\to \infty}\frac{1}{N}\left[\sum^{N-1}_{k=0} x_k^TQx_k + u_k^T R u_k\right] \right \}
\end{equation}
 It is well known that, in the absence of constraints, the optimal control law is a constant state-feedback $u_k = Kx_k$ which can be obtained from the following \ac{SDP} \cite{feron1992numerical}:
\begin{equation} \label{eq:h2}
\begin{split}
    \min_{\gamma, P, M, W} \quad & \gamma\\ 
    \st \quad \quad & C(\gamma, P, M, W)
    \succeq 0,\; P \succ 0,
\end{split}
\end{equation}
where $C=\text{diag}(C_1, C_2, C_3)$ and 
\begin{equation} \nonumber
\begin{split}
    C_1 & = \gamma -\Tr{QP}-\Tr{W} \\
    C_2 & = \begin{bmatrix}
        P-I & AP+BM \\ * & P
    \end{bmatrix} \\
    C_3 & = \begin{bmatrix}
        W & R^{1/2}M \\ * & P
    \end{bmatrix}.
\end{split}
\end{equation}
If \eqref{eq:h2} is feasible, the optimal feedback gain $K=MP^{-1}$ achieves the minimum \ac{LQR} cost $\gamma$. The closed-loop system is then stable as certified by the Lyapunov function $x^TP^{-1}x$.  

\begin{proposition}[\hspace{1sp}\cite{feron1992numerical}]
    The program \eqref{eq:h2} is well-posed if $R\succ 0$ and the system is controllable.
\end{proposition}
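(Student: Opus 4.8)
The plan is to read ``well-posed'' as the conjunction of two properties: the feasible set of \eqref{eq:h2} is nonempty, and the objective $\gamma$ is bounded below on it, so that the optimal value is finite (and, I will argue, attained). The decisive observation is that the two block LMIs $C_2\succeq 0$ and $C_3\succeq 0$ are, via the Schur complement with respect to the positive definite $(2,2)$ block $P$, equivalent to $P-I-(AP+BM)P^{-1}(AP+BM)^T\succeq 0$ and $W-R^{1/2}MP^{-1}M^TR^{1/2}\succeq 0$. Writing $K=MP^{-1}$ and $A_{cl}=A+BK$, and using $AP+BM=A_{cl}P$, the first reduces to the discrete Lyapunov inequality $P-A_{cl}PA_{cl}^T\succeq I$ and the second to $W\succeq R^{1/2}KPK^TR^{1/2}$, so the program is just the standard covariance/Lyapunov certificate of the closed-loop LQR cost.

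For feasibility I would first use that controllability implies stabilizability, hence there exists a gain $K$ making $A_{cl}=A+BK$ Schur stable. For this fixed $K$ the discrete Lyapunov equation $P=A_{cl}PA_{cl}^T+I$ has the unique solution $P=\sum_{j\ge 0}A_{cl}^j(A_{cl}^T)^j\succeq I\succ 0$. Taking this $P$, setting $M=KP$, $W=R^{1/2}MP^{-1}M^TR^{1/2}$ (so both Schur complements vanish and $C_2,C_3\succeq 0$), and finally $\gamma=\Tr{QP}+\Tr{W}$ (so $C_1=0$), produces a point satisfying every constraint with $P\succ 0$. For boundedness below I would read off from the $(1,1)$ blocks that any feasible point has $P\succeq I$ and $W\succeq 0$; since $Q\succeq 0$ gives $\Tr{QP}\ge 0$, the constraint $C_1\ge 0$ forces $\gamma\ge\Tr{QP}+\Tr{W}\ge 0$. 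A nonempty feasible set together with $\gamma\ge 0$ yields a finite infimum. I would also note that $P\succeq I$ makes $P\succ 0$ automatic and renders the feasible set closed.

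The main obstacle I anticipate is attainment of the minimum rather than mere finiteness of the infimum, since $\gamma$ does not directly coerce all the decision variables. Here is where $R\succ 0$ earns its keep: on a sublevel set $\{\gamma\le\gamma_0\}$, the bound $\Tr{W}\le\gamma_0$ together with $W\succeq R^{1/2}MP^{-1}M^TR^{1/2}$ and the invertibility of $R^{1/2}$ controls $M$ in terms of $P$, while the Lyapunov block ties $P$ to the closed-loop map; the delicate case is $Q$ merely semidefinite, where directions in $\ker Q$ are not bounded through the cost. I would resolve this by restricting, without loss of optimality, to the minimal $P$ for each stabilizing $K$ (its Lyapunov solution), reducing the search to the closed-loop covariance and invoking compactness of the resulting sublevel set, or equivalently by identifying the optimizer with the stabilizing solution of the discrete algebraic Riccati equation guaranteed by $(A,B)$ stabilizable and $R\succ 0$. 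If ``well-posed'' is intended only as feasibility plus a finite optimal value, this last paragraph is unnecessary and the first two steps suffice.
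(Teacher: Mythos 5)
The paper does not prove this proposition at all: it is imported verbatim, with its citation, from \cite{feron1992numerical}, so your argument can only be compared against the standard one from that literature, which your first two paragraphs essentially reproduce. Those two paragraphs are correct and complete: Schur complements with respect to $P\succ 0$ turn $C_2\succeq 0$ and $C_3\succeq 0$ into $P-A_{cl}PA_{cl}^T\succeq I$ and $W\succeq R^{1/2}KPK^TR^{1/2}$ with $K=MP^{-1}$, $A_{cl}=A+BK$; controllability supplies a Schur-stable $A_{cl}$, whose Lyapunov solution $P=\sum_{j\ge 0}A_{cl}^j(A_{cl}^T)^j\succeq I$ yields a strictly feasible point after setting $M=KP$, $W=R^{1/2}MP^{-1}M^TR^{1/2}$, $\gamma=\Tr{QP}+\Tr{W}$; and the diagonal blocks force $\gamma\geq\Tr{QP}+\Tr{W}\geq 0$ on the entire feasible set, so the optimal value is finite.

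The one place you overstep is attainment, and it is not a removable technicality: under exactly the stated hypotheses the minimum can genuinely fail to be attained. Take $n=m=1$, $A=1$, $B=1$, $Q=0$, $R=1$, which is controllable with $R\succ 0$. Feasibility of $C_2$ forces $K=M/P\in(-2,0)$ and $P\bigl(1-(1+K)^2\bigr)\geq 1$, hence every feasible point satisfies
\begin{equation*}
\gamma \;\geq\; K^2 P \;\geq\; \frac{K^2}{1-(1+K)^2} \;=\; \frac{-K}{2+K} \;>\; 0,
\end{equation*}
yet letting $K\to 0^-$ drives this bound to $0$; the infimum is $0$ and is attained by no feasible point. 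Consistently, the discrete algebraic Riccati equation here has only the solution $P=0$ with the non-stabilizing gain $K=0$, so your proposed identification of the optimizer with a stabilizing DARE solution breaks down, and your compactness argument fails for the same reason: when $A$ has unit-circle eigenvalues invisible to $Q$, the sublevel sets of the cost over stabilizing gains are not compact. The correct reading of ``well-posed''---the one you anticipate in your closing sentence---is feasibility plus a finite optimal value; with that reading your first two steps constitute a complete proof and the third paragraph should simply be dropped rather than repaired.
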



\subsection{Semialgebraic Optimization via Sum-of-Squares}

A basic semialgebraic set is defined by a finite collection of polynomial equalities and inequalities. In this paper, we restrict our focus to semialgebraic sets defined solely by polynomial inequalities:
\begin{equation} \nonumber
    \mathcal{S} = \left\{x\in\R^n \colon f_i(x)\geq 0,\; i=1,\ldots, m \right\}.
\end{equation}
A set $\mathcal{S}$ is said to be \textit{Archimedean} if there exists a finite $R$ and \ac{SOS} polynomials $\sigma_i \in\Sigma[x]$ such that
\begin{equation} \nonumber
    R - \|x\|_2^2 = \sigma_0 (x) + \sum\nolimits \sigma_i f_i(x).
\end{equation}
Putinar's \ac{P-satz} \cite{putinar1993positive} states that a polynomial $p(x)$ is positive over an Archimedean set $\mathcal{S}$ if and only if $p(x)$ admits a representation as 
\begin{equation}\nonumber
    p(x) = \sigma_0 (x) + \sum\nolimits \sigma_i f_i(x).
\end{equation}

\subsection{Duality}
Consider a primal problem of the form
\beq
\begin{split}
    p^* = \min_{x} \; & f_0(x)\quad \st \; f_i(x) \leq 0,\; i=1,\dots,m,
\end{split}
\eeq
where all $f_i$ are convex. The corresponding dual problem is
\beq
\begin{split}
    d^* = \min_{\lambda\geq 0} \; & \inf_x f_0(x) + \sum_{i=1}^m \lambda_i f_i(x).
\end{split}
\eeq
\begin{lemma}[\hspace{1sp}\cite{boyd2004convex}] \label{lem:duality}
    If Slater's condition holds, i.e., there exists an $x$ such that $f_i(x)<0$ for all $i=1,\dots,m$, then strong duality holds with $p^*=d^*$.
\end{lemma}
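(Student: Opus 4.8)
The plan is to establish Lemma~\ref{lem:duality} by the classical separating-hyperplane argument for convex programs, splitting strong duality into the easy inequality $d^* \le p^*$ (weak duality) and the reverse inequality $d^* \ge p^*$, for which Slater's condition is indispensable. Writing the dual function as $g(\lambda) := \inf_x \bigl[ f_0(x) + \sum_{i=1}^m \lambda_i f_i(x) \bigr]$, weak duality is immediate: for any $\lambda \ge 0$ and any primal-feasible $x$ (so $f_i(x) \le 0$), the Lagrangian obeys $f_0(x) + \sum_i \lambda_i f_i(x) \le f_0(x)$, hence $g(\lambda) \le f_0(x)$; taking the infimum over feasible $x$ and the supremum over $\lambda \ge 0$ gives $d^* \le p^*$. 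The real content is the reverse inequality, which I would obtain by exhibiting a dual-feasible $\lambda^\star \ge 0$ with $g(\lambda^\star) \ge p^*$.

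To construct $\lambda^\star$, I would introduce the value set
\beq \nonumber
\mathcal{A} = \left\{ (u,t) \in \R^m \times \R : \exists\, x,\ f_i(x) \le u_i\ \forall i,\ f_0(x) \le t \right\},
\eeq
which is convex precisely because each $f_i$ and $f_0$ is convex, and which satisfies $p^* = \inf\{ t : (0,t) \in \mathcal{A} \}$. The point $(0,p^*)$ lies on the boundary of $\mathcal{A}$, so the supporting-hyperplane theorem yields a nonzero $(\mu,\nu) \in \R^m \times \R$ with $\mu^T u + \nu t \ge \nu p^*$ for all $(u,t) \in \mathcal{A}$. Since $\mathcal{A}$ is upward closed in every coordinate (if $(u,t) \in \mathcal{A}$ and $u' \ge u,\ t' \ge t$, the same witnessing $x$ shows $(u',t') \in \mathcal{A}$), letting any component grow without bound forces $\mu \ge 0$ and $\nu \ge 0$.

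The crux, and the step I expect to be the main obstacle, is ruling out the degenerate case $\nu = 0$, which is exactly where Slater's condition enters. If $\nu = 0$, the inequality collapses to $\mu^T u \ge 0$ on $\mathcal{A}$; evaluating at the strictly feasible $\tilde x$ guaranteed by Slater gives a point $\bigl((f_1(\tilde x),\dots,f_m(\tilde x)),\, f_0(\tilde x)\bigr) \in \mathcal{A}$ with $\sum_i \mu_i f_i(\tilde x) \ge 0$, yet $\mu \ge 0$, $\mu \ne 0$, and $f_i(\tilde x) < 0$ force $\sum_i \mu_i f_i(\tilde x) < 0$, a contradiction. Hence $\nu > 0$, and after dividing through by $\nu$ and setting $\lambda^\star = \mu/\nu \ge 0$, the supporting inequality applied to the points $\bigl((f_1(x),\dots,f_m(x)),\, f_0(x)\bigr)$ reads $f_0(x) + \sum_i \lambda_i^\star f_i(x) \ge p^*$ for all $x$; taking the infimum over $x$ gives $g(\lambda^\star) \ge p^*$, so $d^* \ge g(\lambda^\star) \ge p^*$, and together with weak duality this yields $p^* = d^*$ with the dual optimum attained at $\lambda^\star$. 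As this is precisely the statement of \cite{boyd2004convex}, I would simply cite it and reproduce the separating-hyperplane construction above only if a self-contained argument is wanted.
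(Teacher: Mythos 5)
Your proof is correct, and it matches the paper's treatment in the only sense available: the paper gives no proof of Lemma~\ref{lem:duality} at all, deferring entirely to \cite{boyd2004convex}, and your argument is precisely the standard supporting-hyperplane proof of Slater-based strong duality found in that reference (weak duality, the convex value set $\mathcal{A}$, nonnegativity of the multipliers from upward-closedness, and the Slater point ruling out $\nu=0$). The only minor omission is the degenerate case $p^*=-\infty$, where weak duality alone already forces $d^*=p^*$; apart from that, your reconstruction is exactly the textbook route the citation points to.
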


\subsection{Problem Formulation}
Consider a discrete-time, single-input\footnote{Extension to multi-input systems is trivial with an $\ell_\infty$ control bound.} \ac{LTI} system with input constraints
\begin{equation} \label{eq:dynamics}
    x_{k+1} = A x_k + B u_k + w_k, \quad x_k, w_k \in \mathbb{R}^n, \; |u| \leq 1.
\end{equation}
 The system parameters $A,B$ are unknown and the disturbance is bounded by $\|w\|_\infty \leq \epsilon$ with no specific distribution assumed. Then the problem of interest to this paper is:
\begin{problem}\label{prob:lqr}
Given noisy data $\mathcal{D}=\left\{(x_k^d, u_k^d, x_{k+1}^d)\right\}^{N_d-1}_{k=0}$, find a controller $u(x)$ that renders the unknown system \eqref{eq:dynamics} stable and minimizes the quadratic cost function \eqref{eq:stochastic_LQR} subject to the constraint $|u|\leq 1$.
\end{problem}

\section{Receding-Horizon Constrained LQR} \label{sec:model-based}

In this section, we first show how, in the known model case  the \ac{SDP} problem \eqref{eq:h2} can be adapted to incorporate input constraints in a receding-horizon fashion. 
Let $x_k$ denote the current state of the system.
Consider the following \acp{LMI}, parametric in $x_k$:
\begin{subequations}
\label{eq:DLMI}
\begin{align}
\gamma_k - \Tr{QP_k} -\Tr{W_k} & \geq  0 \label{eq:DLMImodel1}\\
\begin{bmatrix}P_k-(\frac{1}{\tau_k}+\epsilon)I & AP_k+ BM_k \\ * & P_k \end{bmatrix} & \succeq 0 \label{eq:DLMImodel2}\\
\begin{bmatrix} \beta_k & M_k \\ * & P_k \end{bmatrix} & \succeq 0  \label{eq:DLMImodel3} \\
\begin{bmatrix} \beta_k & \beta_k x^T_k \\ * & P_k \end{bmatrix} & \succeq 0  \label{eq:DLMImodel4} \\
\begin{bmatrix}W_k & R^{\frac{1}{2}}M_k \\ * & P_k \end{bmatrix} & \succeq 0  \label{eq:DLMImodel5}
\end{align}    
\end{subequations}
and the receding-horizon control law shown in Algorithm \ref{alg:RH}:
\begin{algorithm}[h]
    \caption{Model-Based Input-Constrained LQR}\label{alg:RH}
\begin{algorithmic}[1]
    \State {Given initial condition $x_1$ and initial $P_0\succ 0,\tau_0>0$}
    \State Start at $k=1$, repeat:
    \begin{align}
    (P_k,M_k,W_k,\tau_k,\beta_k) \gets& 
    \mathop{\arg\min} \; \tau \gamma \nonumber \\ 
    \st \quad 
    & \eqref{eq:DLMI} \text{ holds, and} \nonumber \\
    & \tau_k \, \leq  \tau_{k-1} \label{eq:tau} \\
    & P_k \succeq P_{k-1} \label{eq:p}
    \end{align}
   \State \quad Apply the control $u_k \gets M_kP_k^{-1} x_k$
   \State \quad Collect new measurement $x_{k+1}$
   \State \quad $k \gets k+1$
   \State Until termination
\end{algorithmic}
\end{algorithm}

\begin{theorem}\label{thm:model_based} 
If $A$ is Schur stable, then the  inequalities \eqref{eq:DLMI}-\eqref{eq:p} are feasible for all $k \geq 0$, regardless of the initial condition $x_0$, and the control law generated by Algorithm \ref{alg:RH} has the following properties: 
    \begin{enumerate}
        \item It satisfies the constraint $|u_k| \leq 1$.
        \item It renders the origin an exponentially stable equilibrium point of the closed-loop system.
        \item It minimizes an upper bound of the \ac{LQR} cost.
    \end{enumerate}
\end{theorem}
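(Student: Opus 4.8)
The plan is to read all three properties off the Schur-complement reductions of \eqref{eq:DLMI}, using \eqref{eq:DLMImodel2} as a strict Lyapunov certificate for $V_k(x)=x^TP_k^{-1}x$ and the monotonicity constraints \eqref{eq:tau}--\eqref{eq:p} to glue the time-varying certificates together. First I would record the two reductions I need. Taking the Schur complement of \eqref{eq:DLMImodel3} gives $M_kP_k^{-1}M_k^T\le\beta_k$, and of \eqref{eq:DLMImodel4} gives $x_k^TP_k^{-1}x_k\le\beta_k^{-1}$; since the applied input is $u_k=K_kx_k$ with $K_k=M_kP_k^{-1}$, the Cauchy--Schwarz inequality in the $P_k^{-1}$ inner product yields $u_k^2=(M_kP_k^{-1}x_k)^2\le(M_kP_k^{-1}M_k^T)(x_k^TP_k^{-1}x_k)\le\beta_k\beta_k^{-1}=1$, which is Property~1. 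The Schur complement of \eqref{eq:DLMImodel2} gives $P_k-A_{cl,k}P_kA_{cl,k}^T\succeq(\tfrac{1}{\tau_k}+\epsilon)I$ with $A_{cl,k}=A+BK_k$; writing $N_k=P_k^{-1/2}A_{cl,k}P_k^{1/2}$ this is $I-N_kN_k^T\succeq(\tfrac{1}{\tau_k}+\epsilon)P_k^{-1}$, and since $N_k$ and $N_k^T$ share singular values it is equivalent to $A_{cl,k}^TP_k^{-1}A_{cl,k}\preceq\rho_kP_k^{-1}$ with $\rho_k=1-(\tfrac{1}{\tau_k}+\epsilon)/\lambda_{\max}(P_k)<1$, i.e. $V_k(A_{cl,k}x)\le\rho_kV_k(x)$.

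For feasibility I would exhibit an explicit fallback that works at every step regardless of the noise realization. Since $A$ is Schur stable, fix $\bar P\succ0$ with $\bar P-A\bar PA^T\succeq I$; at step $k$ take $M_k=0$, $W_k=0$, $\tau_k=\tau_{k-1}$, and $P_k=P_{k-1}+s\bar P$ with $s$ large enough that the dominant term $s(\bar P-A\bar PA^T)\succeq sI$ makes $P_k-AP_kA^T\succeq(\tfrac{1}{\tau_k}+\epsilon)I$ hold, so \eqref{eq:DLMImodel2} is satisfied. Then \eqref{eq:tau} and \eqref{eq:p} hold by construction, a sufficiently small $\beta_k$ satisfies \eqref{eq:DLMImodel4}, a large $\gamma_k$ satisfies \eqref{eq:DLMImodel1}, and $u_k=0$ trivially obeys $|u_k|\le1$. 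Because this is feasible for any finite $x_k$ and a Schur-stable $A$ keeps the trajectory bounded under bounded $w$, the feasible set is nonempty for all $k$, independently of $x_0$ and of the disturbance; this is precisely why the noise need not enter the feasibility argument.

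For Property~2 I would run a switching-Lyapunov argument. Constraint \eqref{eq:p} gives $P_k^{-1}\preceq P_{k-1}^{-1}$, hence $V_k(x)\le V_{k-1}(x)$ pointwise; combining this with the contraction above along the closed loop yields $V_k(x_k)\le V_{k-1}(x_k)=V_{k-1}(A_{cl,k-1}x_{k-1})\le\rho_{k-1}V_{k-1}(x_{k-1})$, so that $V_k(x_k)\le\big(\prod_{j<k}\rho_j\big)V_0(x_0)$, and sandwiching $V_k$ between $\lambda_{\min}(P_k^{-1})\|x\|^2$ and $\lambda_{\max}(P_k^{-1})\|x\|^2$ converts this into $\|x_k\|\le c\,\bar\rho^{k/2}\|x_0\|$ once a uniform $\rho_k\le\bar\rho<1$ is in hand. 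For Property~3 I would mirror the covariance argument of \cite{feron1992numerical}: \eqref{eq:DLMImodel2} forces $P_k$ to dominate the stationary covariance of $A_{cl,k}$, \eqref{eq:DLMImodel5} gives $W_k\succeq R^{1/2}K_kP_kK_k^TR^{1/2}$, and \eqref{eq:DLMImodel1} then makes $\gamma_k$ an upper bound on $\Tr{(Q+K_k^TRK_k)P_k}$; the factor $\tau_k$ in the objective is present precisely to rescale the modified noise level $\tfrac{1}{\tau_k}+\epsilon$ back to the unit-covariance cost \eqref{eq:stochastic_LQR}, so $\tau_k\gamma_k$ bounds the true LQR cost and the program minimizes that bound.

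The main obstacle I anticipate is the \emph{uniform} contraction in Property~2, i.e. a uniform upper bound on $\lambda_{\max}(P_k)$, since the monotone-growth constraint \eqref{eq:p} a priori allows $P_k$ to grow and drive $\rho_k\to1$. The resolution I expect is to invoke optimality rather than feasibility: the inflating fallback has large $\Tr{QP_k}$ and hence large objective, so the minimizer of $\tau\gamma$ never inflates $P_k$ beyond the finite value imposed by the initial state through \eqref{eq:DLMImodel4}, after which \eqref{eq:p} caps it; combined with $\tfrac{1}{\tau_k}+\epsilon\ge\tfrac{1}{\tau_0}+\epsilon>0$ from \eqref{eq:tau} this delivers $\bar\rho<1$. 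Making this quantitative, and treating a possibly singular $Q$, is the delicate step; a secondary subtlety is the exact $\tau_k$-rescaling that reconciles the deterministic $\ell_\infty$ disturbance with the stochastic unit-covariance cost in Property~3.
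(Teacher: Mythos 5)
Your proposal is correct in substance and, for Properties 1 and 3, is essentially the paper's own argument: the Schur complements of \eqref{eq:DLMImodel3}--\eqref{eq:DLMImodel4} combined with Cauchy--Schwarz give $|u_k|\leq 1$, and \eqref{eq:DLMImodel1}, \eqref{eq:DLMImodel5} make $\tau\gamma$ an upper bound on the cost exactly along the lines of \cite{feron1992numerical}. Where you genuinely diverge is the feasibility recursion. The paper does not build a fresh fallback point at each step; it observes that the \emph{previous} solution $(P_{k-1},M_{k-1},W_{k-1},\tau_{k-1},\beta_{k-1})$ is itself feasible at time $k$: constraints \eqref{eq:DLMImodel1}--\eqref{eq:DLMImodel3}, \eqref{eq:DLMImodel5}, \eqref{eq:tau}, \eqref{eq:p} do not involve $x_k$, and the Lyapunov decrease \eqref{eq:decreasing} gives $x_k^T P_{k-1}^{-1}x_k < x_{k-1}^T P_{k-1}^{-1}x_{k-1} \leq 1/\beta_{k-1}$, so \eqref{eq:DLMImodel4} survives with the old $\beta_{k-1}$. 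This recursion is also what powers Corollary \ref{cor}: feasibility, once attained, persists even when $A$ is unstable. Your inflation fallback $P_k=P_{k-1}+s\bar P$, $M_k=0$ is valid as far as it goes, but it is strictly weaker for this theorem's purposes: it requires Schur stability of $A$ at every step (so it cannot yield the corollary), and it manufactures exactly the unbounded-$P_k$ growth that you then must rule out to get a uniform contraction rate.

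On that last point, your concern about uniformity of $\rho_k=1-(\tfrac{1}{\tau_k}+\epsilon)/\lambda_{\max}(P_k)$ is genuine, and your optimality-based resolution is, as you concede, not carried out (and is delicate when $Q$ is singular). You should know, however, that the paper does not resolve it either: its proof only establishes strict decrease of $x^T P_k^{-1}x$ and concludes ``asymptotic stability,'' even though the theorem statement claims exponential stability; it never bounds $\lambda_{\max}(P_k)$ nor exhibits a uniform $\bar\rho<1$. So on this step your write-up is at least as rigorous as the published proof and more candid about what is missing. One further omission you share with the paper: both arguments propagate the state as $x_k=(A+BM_{k-1}P_{k-1}^{-1})x_{k-1}$, dropping the disturbance $w_{k-1}$; the $(\tfrac{1}{\tau}+\epsilon)I$ margin in \eqref{eq:DLMImodel2} is evidently intended to absorb it, but neither proof makes that absorption explicit.
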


\begin{proof} The proof is structured in four parts.

\noindent      \textbf{Asymptotic Stability:} Feasibility of  \eqref{eq:DLMImodel2} and \eqref{eq:p}  at $k-1$ implies that{, for $x_{k-1}\neq 0$,}  
\beq \label{eq:decreasing} \begin{aligned}
&V(x;k) \doteq x_{k}^TP_{k}^{-1}
x_k^T  \leq x_{k}^TP_{k-1}^{-1}
x_k^T \\
&= x_{k-1}^T(A+BM_{k-1}P_{k-1}^{-1})^TP_{k-1}^{-1}(A+BM_{k-1}P_{k-1}^{-1})x_{k-1} \\
&< x_{k-1}^TP_{k-1}^{-1}
x_{k-1}.
\end{aligned}
\eeq
Hence $V(x;k)=x^T P_k^{-1}x $ serves as a Lyapunov function, ensuring asymptotic stability.

\noindent \textbf{Feasibility:} Let $x_0$ denote the initial condition of the system. Since $A$ is stable, take $M_0=0$ in \eqref{eq:DLMImodel2} and select $P_0$ as a solution to the Lyapunov inequality
    \beq\nonumber
    P_0-AP_0A^T \succeq \Big(\frac{1}{\tau_0}+\epsilon\Big)I,
    \eeq
    where $\tau_0 > 0$ is arbitrary. Choose $\beta_0$ such that $1/\beta_0 \geq x_0^TP_0^{-1}x_0$. With these choices, \eqref{eq:DLMI} is feasible at $k=0$.  {Then for any subsequent time $k$,} feasibility of \eqref{eq:DLMI} at time $k-1$, together with \eqref{eq:decreasing}, implies $1/\beta_{k-1}> x_{k}^TP_{k}^{-1}x_k$. Hence $(P_{k-1},M_{k-1},W_{k-1},\tau_{k-1},\beta_{k-1})$ is also a feasible solution to \eqref{eq:DLMI} at time $k$.

\noindent     \textbf{Constraint Satisfaction:} From \eqref{eq:DLMImodel3} and  \eqref{eq:DLMImodel4}  we have $\beta_k\geq M_kP_k^{-1}M_k^T$ and $1/\beta_k \geq x_k^T P_k^{-1}x_k$. Applying the Cauchy-Schwarz inequality yields:
    \begin{equation*}
        |u_k|=|M_kP_k^{-1}x_k| \leq (M_kP_k^{-1}M_k^T)^\frac{1}{2}(x_k^T P_k^{-1}x_k)^\frac{1}{2} \leq 1.
    \end{equation*}
     
\noindent       \textbf{Minimization of Upper Bound on LQR Cost:} Conditions \eqref{eq:DLMImodel1} and \eqref{eq:DLMImodel5} guarantee that $\tau\gamma$ is an upper bound of the \ac{LQR} cost under the control action $u_k$. 
\end{proof}


    
     
     

\begin{remark}
    Note that equations \eqref{eq:DLMI} are \acp{LMI} in the variables $(P,M,W,\frac{1}{\tau},\beta)$, and the objective is to minimize $\tau \gamma$. Since $\tau \geq 0$ is a scalar and is monotonically decreasing,  the minimization in the algorithm can be carried out by performing a line search on $\tau$. 
\end{remark}

\begin{corollary} \label{cor}Assume that $A$ is unstable but \eqref{eq:DLMI} is feasible for some initial condition $x_0$. Then, Algorithm \ref{alg:RH} asymptotically stabilizes the system in the region $\mathcal{R} \doteq \{x: x^TP_0^{-1}x \leq x_0^TP_0^{-1}x_0\}$.
\end{corollary}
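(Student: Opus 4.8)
The plan is to reuse the four-part structure of the proof of Theorem~\ref{thm:model_based}, after noting that only the feasibility part relied on $A$ being Schur stable: the asymptotic-stability argument uses only \eqref{eq:DLMImodel2} and \eqref{eq:p}, constraint satisfaction uses only \eqref{eq:DLMImodel3}--\eqref{eq:DLMImodel4} with Cauchy-Schwarz, and the cost bound uses only \eqref{eq:DLMImodel1} and \eqref{eq:DLMImodel5}; none of these invoke stability of $A$, so they transfer verbatim. The whole task therefore reduces to re-establishing feasibility, now localized to $\mathcal{R}$. The key structural observation I would start from is that the \emph{only} state-dependent inequality in \eqref{eq:DLMI} is \eqref{eq:DLMImodel4}, which by a Schur complement is equivalent to the ellipsoidal containment $x_k^T P_k^{-1} x_k \leq 1/\beta_k$; the remaining inequalities \eqref{eq:DLMImodel1}, \eqref{eq:DLMImodel2}, \eqref{eq:DLMImodel3}, \eqref{eq:DLMImodel5} constrain only the decision variables.

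Next I would extract from the standing assumption a certificate $(P_0,M_0,W_0,\tau_0,\beta_0)$ feasible at $x_0$, which in particular forces $1/\beta_0 \geq x_0^T P_0^{-1} x_0$. I would then argue that this single certificate stays feasible for every initial condition $x \in \mathcal{R}$: by definition of $\mathcal{R}$ we have $x^T P_0^{-1} x \leq x_0^T P_0^{-1} x_0 \leq 1/\beta_0$, so \eqref{eq:DLMImodel4} holds at $x$ while the state-independent inequalities are untouched. Hence \eqref{eq:DLMI}--\eqref{eq:p} is feasible at the initial step for any $x \in \mathcal{R}$. From here the recursive-feasibility step of Theorem~\ref{thm:model_based} applies unchanged: feasibility at step $k-1$ together with the strict contraction encoded by the $(\tfrac{1}{\tau}+\epsilon)I$ margin in \eqref{eq:DLMImodel2} gives $x_k^T P_{k-1}^{-1} x_k < x_{k-1}^T P_{k-1}^{-1} x_{k-1} \leq 1/\beta_{k-1}$, so the previous tuple (which trivially satisfies $P_k \succeq P_{k-1}$ and $\tau_k \leq \tau_{k-1}$ when carried forward) certifies feasibility at step $k$. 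Combining this with the stability, constraint-satisfaction, and cost-bound parts of Theorem~\ref{thm:model_based} then shows that every trajectory initialized in $\mathcal{R}$ is driven to the origin.

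The subtle point, and the one demanding the most care, is the precise meaning of ``stabilizes in $\mathcal{R}$''. One might hope that the trajectory \emph{remains} inside $\mathcal{R}$, but because \eqref{eq:p} forces $P_k \succeq P_0$, i.e. $P_k^{-1} \preceq P_0^{-1}$, the invariant ellipsoids $\{x : x^T P_k^{-1} x \leq c\}$ grow with $k$, and the decreasing Lyapunov value $x_k^T P_k^{-1} x_k$ does not by itself bound $x_k^T P_0^{-1} x_k$. The correct reading, which I would prove, is that $\mathcal{R}$ is contained in the basin of attraction: it is exactly the sublevel set of $x^T P_0^{-1} x$ on which the initial feasibility certificate remains valid, which is what makes the localized feasibility argument go through. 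The main obstacle is thus conceptual rather than computational, namely verifying carefully that the recursive-feasibility and contraction steps of Theorem~\ref{thm:model_based} genuinely never used Schur stability of $A$, so that replacing its global feasibility construction by the localized certificate above is the only change required.
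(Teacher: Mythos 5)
Your proposal is correct and takes essentially the same route as the paper, whose proof is a one-line appeal to the recursive-feasibility argument of Theorem~\ref{thm:model_based} (``once the algorithm enters a feasible region, it remains feasible thereafter''). Your write-up just makes explicit two points the paper leaves implicit---that the certificate $(P_0,M_0,W_0,\tau_0,\beta_0)$ at $x_0$ stays feasible for \emph{every} $x\in\mathcal{R}$ because \eqref{eq:DLMImodel4} is the only state-dependent constraint, and that $\mathcal{R}$ is contained in the basin of attraction rather than being an invariant set (since $P_k \succeq P_0$ makes the ellipsoids grow)---and both observations are correct.
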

\begin{proof} This follows from the fact, shown in the proof of  Theorem \ref{thm:model_based}, that once the algorithm enters a feasible region, it remains feasible thereafter. \end{proof}

\begin{remark}
The hypothesis of open-loop stability is necessary for global stabilization. It is well known that, generically, systems with poles {outside of the unit circle} 
cannot be globally stabilized with bounded controls. If this hypothesis fails, Corollary \ref{cor} shows that local stabilization is still achieved. However, in this case the region of attraction is given implicitly, in terms of the feasibility of \eqref{eq:DLMI}. A similar situation arises  in \cite{berberich2022combining, hu2025robust} where the region of attraction is also characterized in terms of feasibility of a set of inequalities.
\end{remark}

\begin{remark} Note that if \eqref{eq:DLMImodel2} is feasible for $(P,M,\tau,\epsilon)$ it is also feasible for
$(P_\tau, \frac{M}{\tau},1,\tau \epsilon)$, where $P_\tau=\tau P$. Thus the control action generated from the solutions of \eqref{eq:DLMImodel2} can be interpreted as:
$u = \tau^{-1}{M}P_{\tau}^{-1}x =  \tau^{-1}u_{unconst}$, where $u_{unconst}$ is the solution to the unconstrained LQR problem ($\tau=1$). Hence $\tau$ plays the role of a scaling parameter that guarantees that $u_k$ is saturated on the boundary of the ellipsoid $x_{k}^TP_{k}^{-1}x_k$.
\end{remark}

\section{Data-Driven Receding-Horizon LQR} \label{sec:data-driven}

   In this section, we extend our framework to case where  the system dynamics are unknown.  Rather than pursuing a classical identification/robust control pipeline, we adopt a data-driven, set-membership approach. Let $\mathcal{D}_{train}=\left\{(x^d_k, u^d_k, x^d_{k+1})\right\}_{k=0}^{N_d-1}$ denote  experimental training data  and 
    $\mathcal{D}_{N}=\left\{(x_k, u_k,x_{k+1})\right\}_{k=0}^{N-1}$ denote data generated during the execution up to time $N$.
   At each time instant, we first construct the tightest set $\mathcal{C}_N$ that contains the unknown dynamics, based on all available data, $\mathcal{D}=\mathcal{D}_{train}\cup \mathcal{D}_{N}$. Next, we find and apply an admissible control law $u_N$ such that  the LMIs \eqref{eq:DLMI} hold for the current state $x_N$. Finally, we update $N \gets N+1$, $x_N \gets x_{N+1}$ and repeat the process. Pursuing a RH scheme improves performance vis-a-vis non-RH approaches  by handling the control constraint less conservatively and by using the tightest description of the uncertain plant.  In the remainder of this section, we show that this approach indeed minimizes a worst-case bound on the LQR cost, and address computational complexity issues.

\subsection{Consistency Set}

Each noisy data sample $(x_k, u_k, x_{k+1})$ imposes linear constraints on the system parameters $(A,B)$. Thus, the set $\mathcal{C}_N$ is a \emph{polytope}, defined  by $N+N_d$ sets of constraints: 
\begin{align} \label{eq:consistency}
\mathcal{C}_N = \left\{ \vphantom{\begin{bmatrix} A & B \\ -A & -B \end{bmatrix}}
A,B \colon\right. & 
\begin{bmatrix}
    A & B \\ -A & -B
\end{bmatrix}
\begin{bmatrix}
    x_k \\ u_k
\end{bmatrix} \leq
\begin{bmatrix}
    \epsilon \mathbf{1} + x_{k+1} \\ 
    \epsilon \mathbf{1} - x_{k+1}
\end{bmatrix}, \nonumber \\
& \forall (x_k,u_k,x_{k+1}) \in \mathcal{D}
\left.\vphantom{\begin{bmatrix} A & B \\ -A & -B \end{bmatrix}} \right\}.
\end{align}

\subsection{Conceptual Robust Data-Driven LQR}
From Theorem~\ref{thm:model_based}, a feasible data-driven controller must satisfies \eqref{eq:DLMI} for all systems in the consistency set \eqref{eq:consistency}.  Thus, a RH robust control law that minimizes an worst-case bound on the LQR cost  can be found by solving, at each time step, a problem of the form

\begin{problem} \label{prob:robust_lqr}
Given data samples $\mathcal{D}$, 
find $\tau, \beta,\gamma > 0$ and matrices $P\in\R^{n\times n}\succ 0, M\in\R^{1\times n}, W\in\R$ that solve: 
\begin{align} \label{eq:robust_lqr}
    \min_{P, M, W, \tau, \beta,\gamma} & \max_{A,B} \quad \tau \gamma \\
    \st \;\;\quad & \textrm{Equations \eqref{eq:DLMI}, \eqref{eq:consistency} hold.} \nonumber
\end{align}
\end{problem}

Since the problem above is affine in $(A,B)$ and the consistency set is a polytope, it can be theoretically solved by \ac{SDP} solvers through a line search on $\tau$ at each vertex of the consistency set \cite{chesi2010lmi}. However, this approach is impractical as the number of vertices in the worst case grows combinatorially with the number of samples \cite{nesterov1994interior}. Alternatively, since \eqref{eq:DLMI} are affine in the design parameters and $\mathcal{C}_N$ is semialgebraic, the problem can be relaxed into an \ac{SDP} using Scherer's P-satz \cite{scherer2006matrix}. However, as shown in \cite{dai2023data}, this will involve matrices of size $\mathcal{O}(n^{2r})$, where $r$ is the order of the relaxation. Thus, it becomes impractical for moderately large systems, even if $r=1$. As we show next, computational complexity can be substantially reduced through the use of duality.

\subsection{Reducing Computational Complexity via Duality}
\begin{theorem} \label{thm:ddc} Assume that int$(\mathcal{C}_N) \not = \emptyset$. Then Problem~\eqref{prob:robust_lqr} is feasible if and only if there exist $\tau, \beta,\gamma > 0$, matrices $P\in\R^{n\times n}\succ 0, M\in\R^{1\times n}, W\in\R$ and a   matrix $\Lambda(z)$ with continuous entries $\Lambda_{i,j}(z) \geq 0$, $i=1,\ldots, 2(N+N_d)$, $j=1,\ldots, n$ in $\|z\|_2 \leq 1$ such that:
    \begin{align} \label{eq:thm2_1}
    \Theta \Lambda(z) &= -2\begin{bmatrix}
        P \\ M
    \end{bmatrix}\begin{bmatrix}
        I_n & 0_n
    \end{bmatrix} z_1 z_1^T \begin{bmatrix}
        0_n \\ I_n
    \end{bmatrix} \\
    0 & < g(z)- \Tr{D\Lambda(z)}, \label{eq:thm2_2}
    \end{align}
where $z=[z_1^T, z_2^T, z_3^T, z_4^T, z_5^T]^T$ and 
    \begin{align*}
    g(z) & = z_1^T \begin{bmatrix} P-(\frac{1}{\tau}+\epsilon)I &  0 \\ * & P \end{bmatrix} z_1 + 
    z_2^T\begin{bmatrix} \beta & M \\ * & P \end{bmatrix}z_2 \\
    &\quad +z_3^T\begin{bmatrix} \beta & \beta x^T \\ * & P \end{bmatrix} z_3 +
    z_4^T\begin{bmatrix}W & R^{\frac{1}{2}}M \\ * & P \end{bmatrix} z_4\\
    &\quad + z_5^T(\gamma - \Tr{QP} -\Tr{W})z_5,\\
D&= \begin{bmatrix}D_{train}& D_N \end{bmatrix}, \quad  \Theta = \begin{bmatrix}\Theta_{train}& \Theta_N \end{bmatrix}  \\
      D_{train} & = \begin{bmatrix}
            \epsilon \textbf{1} + x^d_1, \cdots, \epsilon \textbf{1} + x^d_{N_d}, & \epsilon \textbf{1} - x^d_1, \cdots, \epsilon \textbf{1} - x^d_{N_d} 
        \end{bmatrix}\\
      D_N & = \begin{bmatrix}
            \epsilon \textbf{1} + x_1, \cdots, \epsilon \textbf{1} + x_N, & \epsilon \textbf{1} - x_1, \cdots, \epsilon \textbf{1} - x_N \\
        \end{bmatrix} \\
    \Theta_{train} &=  \begin{bmatrix}
        x_1^d, \cdots, x^d_{N_d}, & -x_1^d, \cdots, -x^d_{N_d} \\
        u^1_d, \cdots, u^d_{N_d}, & -u_1^d, \cdots, -u^d_{N_d}   \end{bmatrix} \\
  \Theta_N &=  \begin{bmatrix}
        x_0, \cdots, x_{N-1}, & -x_0, \cdots, -x_{N-1} \\
        u_0, \cdots, u_{N-1}, & -u_0, \cdots, -u_{N-1}
    \end{bmatrix}.
    \end{align*}
\end{theorem}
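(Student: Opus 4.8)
The plan is to eliminate the inner worst-case over the consistency set $\mathcal{C}_N$ in Problem~\eqref{prob:robust_lqr} by \ac{LP} duality, after first repackaging the five \acp{LMI} of \eqref{eq:DLMI} into a single parametrized quadratic inequality. The first observation is that, among \eqref{eq:DLMImodel1}--\eqref{eq:DLMImodel5}, only \eqref{eq:DLMImodel2} depends on the unknown $(A,B)$, and it does so \emph{affinely}, through the off-diagonal block $AP+BM$. Stacking the five blocks block-diagonally and writing $z=[z_1^T,\dots,z_5^T]^T$ with the partition $z_1=[z_{1a}^T,z_{1b}^T]^T$, the requirement that all five inequalities hold is equivalent to $g(z)+2\,z_{1a}^T(AP+BM)z_{1b}\geq 0$ for every $z$, where $g(z)$ collects exactly the $(A,B)$-independent terms (this is why the off-diagonal block of the $z_1$-form in $g(z)$ is set to $0$). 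Robustness over $\mathcal{C}_N$ then amounts to requiring this for all $(A,B)\in\mathcal{C}_N$. Writing $X=\begin{bmatrix}A & B\end{bmatrix}$, the description \eqref{eq:consistency} is precisely $\{X:\,X\Theta\le D\}$ componentwise, with $\Theta,D$ as in the statement.

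Next I would fix $z$ and dualize the inner minimization $\min_{(A,B)\in\mathcal{C}_N}\bigl[g(z)+2z_{1a}^T(AP+BM)z_{1b}\bigr]$, which for fixed $z$ is an \ac{LP} in $X$. The key structural point is that both the constraint $X\Theta\le D$ and the bilinear objective decouple across the $n$ rows of $X$: the objective equals $g(z)+2\sum_{j=1}^n (z_{1a})_j\,X_{j,:}\begin{bmatrix}P\\M\end{bmatrix}z_{1b}$, and the $j$-th row obeys $\Theta^T X_{j,:}^T\le d_j$, where $d_j$ is the $j$-th row of $D$. This yields $n$ independent \acp{LP}, each with a nonnegative dual vector $\lambda_j\in\R^{2(N+N_d)}$; collecting the $\lambda_j$ as the columns of $\Lambda(z)$ reproduces exactly the index ranges $i=1,\dots,2(N+N_d)$, $j=1,\dots,n$ of the statement. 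Since int$(\mathcal{C}_N)\neq\emptyset$ is Slater's condition, strong duality (Lemma~\ref{lem:duality}) applies: the dual-feasibility/stationarity conditions of the $n$ \acp{LP} stack into the single matrix equality \eqref{eq:thm2_1}, and the common optimal value equals $g(z)-\Tr{D\Lambda(z)}$ after using $\Tr{D\Lambda}=\sum_j d_j^T\lambda_j$, giving \eqref{eq:thm2_2}.

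With this correspondence in hand, both implications follow. \emph{Sufficiency} is weak duality: given a continuous $\Lambda(z)\ge0$ (entrywise) satisfying \eqref{eq:thm2_1}--\eqref{eq:thm2_2}, for any $(A,B)\in\mathcal{C}_N$ and any $z$ the equality \eqref{eq:thm2_1} together with $X\Theta\le D$ and $\Lambda(z)\ge0$ bounds the bilinear term below by $-\Tr{D\Lambda(z)}$, whence $g(z)+2z_{1a}^T(AP+BM)z_{1b}\ge g(z)-\Tr{D\Lambda(z)}>0$; by the first paragraph this is exactly robust feasibility of \eqref{eq:DLMI}, i.e. of Problem~\eqref{prob:robust_lqr}. \emph{Necessity} runs the chain backwards: feasibility makes the inner \ac{LP} bounded below by $0$ for each $z$, so strong duality produces a dual-feasible $\Lambda(z)$ with value $g(z)-\Tr{D\Lambda(z)}\ge0$; positive homogeneity of all the quadratic forms in $z$ then lets me restrict attention to the compact ball $\|z\|_2\le1$.

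The hard part will be the final step: assembling the pointwise-in-$z$ multipliers into a single \emph{continuous} matrix function $\Lambda(z)$ on $\|z\|_2\le1$, rather than a merely measurable selection. I would obtain this via a continuous-selection argument (Michael's selection theorem) for the convex, nonempty-valued set map $z\mapsto\{\Lambda\ge0:\eqref{eq:thm2_1}\ \text{holds},\ g(z)-\Tr{D\Lambda}\ge0\}$, whose defining data depend polynomially, hence continuously, on $z$. A secondary bookkeeping issue is reconciling the strict inequality in \eqref{eq:thm2_2} with the non-strict \acp{LMI} in \eqref{eq:DLMI}; I would handle this through the strict feasibility ($P\succ0$) already built into the \ac{LQR} \ac{SDP} and a vanishing-perturbation argument, which is precisely where Slater's condition is invoked.
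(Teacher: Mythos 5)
Your proposal follows essentially the same route as the paper's proof: scalarize the LMIs \eqref{eq:DLMI} with the indeterminate $z$, isolate the affine dependence on $(A,B)$ in the off-diagonal block of \eqref{eq:DLMImodel2}, treat the worst case over the polytope $\mathcal{C}_N$ as an inner linear program in $[A \;\, B]$ for each fixed $z$, invoke strong duality (Lemma~\ref{lem:duality}, with int$(\mathcal{C}_N)\neq\emptyset$ supplying Slater's condition) to trade robust feasibility for the existence of nonnegative multipliers satisfying \eqref{eq:thm2_1}--\eqref{eq:thm2_2}, and use homogeneity in $z$ to restrict to the unit ball; your explicit row-by-row decoupling of the LP is just a more careful bookkeeping of what the paper does when it ``collects all the multipliers $\lambda_k$ into $\Lambda$.'' The one genuine divergence is the continuity of $z\mapsto\Lambda(z)$. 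The paper observes that $z$ enters the dual LP \eqref{eq:dualz} only through the right-hand side and cites Mangasarian's theorem on continuity of linear-program solutions under right-hand-side perturbations. You instead invoke Michael's selection theorem for the set-valued map $z\mapsto\{\Lambda\geq 0 : \eqref{eq:thm2_1} \text{ holds},\; g(z)-\Tr{D\Lambda}\geq 0\}$. That route can be completed, but it is the heavier tool and leaves you two obligations that the paper's citation discharges in one step: (i) lower semicontinuity of the map, which is not automatic for polyhedral maps with moving right-hand sides and is typically established with exactly the Hoffman/Walkup--Wets-type Lipschitz bounds that underlie Mangasarian's result; and (ii) closedness of values, which forces you to select under the non-strict inequality and then argue separately (e.g., from $d^*(z)=p^*(z)>0$ for $z\neq 0$) that the strict inequality \eqref{eq:thm2_2} can be recovered --- your ``vanishing perturbation'' remark gestures at this but would need to be carried out. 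Net effect: same theorem, same duality skeleton, with your version trading a one-line citation for a selection-theorem argument that must itself be propped up by LP stability theory.
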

\begin{proof}
Begin by scalarizing the LMIs \eqref{eq:DLMI} by pre/post-multiplying by the indeterminate $z$. Define
\beq  \label{eq:primal}
     p(A, B) \doteq g + z_1^T\begin{bmatrix}0 & AP+ BM \\ * & 0 \end{bmatrix} z_1
\eeq
 with all variables except $(A,B)$  treated as fixed parameters and let
 $p^*= \min_{(A,B)\in \mathcal{C}_N} p(A,B)$.
It is clear that Problem~\ref{prob:robust_lqr} is feasible if and only if the optimal value $p^*>0$ when $z\neq 0$. The corresponding Lagrangian is:
\begin{multline}
L(A,B,\lambda)  =  p(A,B) \\+ \sum \lambda_k^T\left(\begin{bmatrix}
        A & B \\ -A & -B
    \end{bmatrix} \begin{bmatrix}
        x_k \\ u_k
    \end{bmatrix} - \begin{bmatrix}
        \epsilon \textbf{1} + x_{k+1} \\ \epsilon \textbf{1} - x_{k+1}
    \end{bmatrix}\right).
\end{multline}
Collecting all the multipliers $\lambda_k^T$'s into $\Lambda$ and with some algebra, the Lagrangian can be rewritten as 
\begin{multline}
L(A,B,\lambda) = g + \text{Tr}\left( \begin{bmatrix}
        A & B
    \end{bmatrix} \Theta \Lambda - D\Lambda \right) \\
    + \text{Tr}\left( \begin{bmatrix}
        A & B
    \end{bmatrix} 2\begin{bmatrix}
        P \\ M
    \end{bmatrix}\begin{bmatrix}
        I_n & 0_n
    \end{bmatrix} z_1 z_1^T \begin{bmatrix}
        0_n \\ I_n
    \end{bmatrix} \right).
\end{multline}
Thus, the dual problem of \eqref{eq:primal} is:
\begin{subequations}\label{eq:dual}
\begin{align}  
    d^*=\max_{\Lambda_{i,j} \geq 0} \quad &  d(\Lambda)\doteq g - \Tr{D\Lambda}  \label{eq:dual1}\\
    \st \;\quad & \Theta \Lambda = -2\begin{bmatrix}
        P \\ M
    \end{bmatrix}\begin{bmatrix}
        I_n & 0_n
    \end{bmatrix} z_1 z_1^T \begin{bmatrix}
        0_n \\ I_n
    \end{bmatrix} \label{eq:dual2}.
\end{align}
\end{subequations}
Since the primal problem is affine in $(A,B)$ with non-empty $\mathcal{C}$, from Lemma~\ref{lem:duality} strong duality holds. Thus, $p^*>0$ iff $d^*> 0$. Next, we show that the elements of $\Lambda(z)$ can be chosen as continuous functions. To this end, note that for each $z$, \eqref{eq:dual} can be written as a linear program in the entries of $\Lambda$, where $z$ appears only in the right-hand side:
\begin{align} 
  d^*= & \max_{\mu, \Lambda_{i,j} \geq 0}  \mu  \nonumber \\
   & \quad\st \quad  \textrm{\eqref{eq:dual2} and }
  \mu + \Tr{D\Lambda} \leq g(z). \label{eq:dualz}
\end{align}
Continuity of $\Lambda(z)$ follows from  Theorem
2.4 in \cite{Mangasarian} establishing continuity
of the solutions of linear programs with respect to
perturbations in the right-hand side. Finally, the fact that $z$ can be restricted to $\|z\|_2 \leq 1$ follows from the fact that the right-hand side in \eqref{eq:dualz} is homogeneous in $z$.
\end{proof}

\subsection{Polynomial Optimization with SDP Relaxation}
From Theorem~\ref{thm:ddc}, the min-max problem in \eqref{eq:robust_lqr} reduces to the following (infinite-dimensional) polynomial optimization, which can be reformulated as an infinite-dimensional functional linear program over continuous, non-positive multiplier functions $\Lambda_{i,j}(z)$ over the domain $\|z\|_2 \leq 1$. Thus, by Stone-Weierstrass Theorem, each $\Lambda_{i,j}$ can be uniformly approximated, arbitrarily close, with polynomials, leading to the following problem.
\begin{problem} \label{prob:}
Given data samples $\mathcal{D}$
such that int($\mathcal{C}_N$) $\not = \emptyset$, find $\tau, \beta,\gamma > 0$, matrices $P\in\R^{n\times n}\succ 0, M\in\R^{1\times n}, W\in\R$ and a polynomial matrix $\Lambda(z)\in\R [z]^{2(N+N_d)\times n}>0$ in $\|z\|_2\leq 1$ such that:
\beq \label{eq:robust_lqr_poly}
    \min_{P, M, W, \tau, \beta,\gamma} \tau \gamma\quad \st \;
\textrm{Equations \eqref{eq:thm2_1}, \eqref{eq:thm2_2} hold.}\nonumber
\eeq
\end{problem}

 To obtain tractable solutions, finite-dimensional relaxations can be constructed via Putinar's \ac{P-satz}. 

Since $\|z\|_2\leq 1$ defines an Archimedean set, \eqref{eq:robust_lqr_poly} is feasible if and only if there exist \ac{SOS} polynomials $\sigma_i^{(k)} \in\Sigma[x]$ such that the following conditions hold
\begin{align}
\Lambda_{i,j}(z) &= \sigma_0^{(i,j)}(z) + \sigma_1^{(i,j)}(z) (1-z^T z), \label{eq:sos1}\\
g(z)- \Tr{D\Lambda(z)} &= \sigma_0^{(g)}(z) + \sigma_1^{(g)}(z) (1-z^T z), \label{eq:sos2}
\end{align}
where $\Lambda_{i,j}$ denote the entries of the matrix $\Lambda$ and all \ac{SOS} multipliers appearing in different constraints are treated as independent decision variables. 

With \ac{SDP} relaxations (up to certain degree) of each \ac{SOS} function, Problem~\ref{prob:} reduces to an \ac{SDP} that can be efficiently solved using off-the-shelf solvers.  Applying this idea at each time step and updating the consistency set with the new data leads to the data-driven receding-horizon \ac{LQR} control Algorithm \ref{alg:ddc}.

\begin{algorithm}[h]
    \caption{Robust Data-Driven RH LQR}\label{alg:ddc}
\begin{algorithmic}[1]
    \State {Given initial measurements $\mathcal{D}_{train}$, a noise bound $\epsilon$, initial condition $x_1$ and initial $P_0\succ 0,\tau_0>0$}
    \State Start at $k=1$, repeat:
    \begin{align}
    (P_k,M_k,\tau_k) \gets& 
    \mathop{\arg\min} \; \tau \gamma \label{eq:dd_lqr} \\
    \st \quad 
    & \eqref{eq:tau},\eqref{eq:p},\eqref{eq:thm2_1},\eqref{eq:sos1},\eqref{eq:sos2}\text{ hold} \nonumber
    \end{align}
   \State \quad Apply the control $u_k \gets M_kP_k^{-1} x_k$
   \State \quad Collect new measurement $x_{k+1}$
    \State \quad Add $\left\{x_k, u_k, x_{k+1}\right\}$ to the data record 
    \State \quad $k \gets k+1$
   \State Until termination
\end{algorithmic}
\end{algorithm}

\begin{theorem}\label{thm:main} 
    The control law generated by Algorithm \ref{alg:ddc} satisfies the input constraint and asymptotically stabilizes the (unknown) closed-loop system $(A,B)$ that generated the data while minimizing a worst-case upper bound of the \ac{LQR} cost.
\end{theorem}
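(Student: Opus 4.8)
The plan is to reduce the data-driven claim to the already-established model-based result, Theorem~\ref{thm:model_based}, by exploiting two facts: that the true data-generating pair $(A,B)$ always lies in the consistency set, and that the program solved at each step of Algorithm~\ref{alg:ddc} certifies the LMIs \eqref{eq:DLMI} for \emph{every} member of that set. First I would observe that, because each sample satisfies $x_{k+1}=Ax_k+Bu_k+w_k$ with $\|w_k\|_\infty\le\epsilon$, the defining inequalities of $\mathcal{C}_N$ in \eqref{eq:consistency} are met by the true $(A,B)$; hence $(A,B)\in\mathcal{C}_N$ for every $N$. Next I would invoke Theorem~\ref{thm:ddc}: feasibility of the SOS certificates \eqref{eq:sos1}--\eqref{eq:sos2}, which by Putinar's P-satz yield the dual relations \eqref{eq:thm2_1}--\eqref{eq:thm2_2}, guarantees via strong duality that the scalarized LMIs \eqref{eq:DLMI} hold for all $(A,B)\in\mathcal{C}_N$, and therefore in particular for the true system.

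Once the LMIs are known to hold at the true $(A,B)$, the three closed-loop properties follow along the lines of the proof of Theorem~\ref{thm:model_based}. Constraint satisfaction uses only \eqref{eq:DLMImodel3}--\eqref{eq:DLMImodel4}, which are independent of $(A,B)$, so the Cauchy--Schwarz bound $|u_k|\le1$ carries over unchanged. Asymptotic stability follows by instantiating \eqref{eq:DLMImodel2} at the true $(A,B)$: together with the monotonicity constraint \eqref{eq:p}, the quadratic $V(x;k)=x^TP_k^{-1}x$ is a decreasing Lyapunov function along the realized trajectory, where the $(\tfrac1\tau+\epsilon)I$ margin absorbs both the plant uncertainty and the disturbance. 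Finally, since the program minimizes $\tau\gamma$ subject to \eqref{eq:DLMImodel1} and \eqref{eq:DLMImodel5} holding over all of $\mathcal{C}_N$, $\tau\gamma$ is an LQR-cost bound valid for every consistent system, i.e.\ a worst-case bound.

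The remaining and most delicate step is recursive feasibility, which I would establish at the level of Problem~\ref{prob:robust_lqr} and then transfer to the SOS program. The monotonicity $\mathcal{C}_{N+1}\subseteq\mathcal{C}_N$ is immediate, since a new sample only appends constraints in \eqref{eq:consistency}; thus the state-independent conditions \eqref{eq:DLMImodel1}, \eqref{eq:DLMImodel2}, \eqref{eq:DLMImodel3}, \eqref{eq:DLMImodel5}, enforced by the previous iterate over $\mathcal{C}_{k-1}$, continue to hold over the smaller $\mathcal{C}_k$. For the state-dependent block \eqref{eq:DLMImodel4}, the Lyapunov decrease above gives $1/\beta_{k-1}\ge x_k^TP_{k-1}^{-1}x_k$, so the previous solution $(P_{k-1},M_{k-1},W_{k-1},\tau_{k-1},\beta_{k-1})$ stays feasible at the new state, exactly mirroring the model-based argument; by Theorem~\ref{thm:ddc} this is equivalent to the existence of continuous multipliers satisfying \eqref{eq:thm2_1}--\eqref{eq:thm2_2}. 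The true obstacle, I expect, is the passage from continuous to polynomial multipliers: the algorithm actually solves the SOS restriction \eqref{eq:sos1}--\eqref{eq:sos2}, so one must argue that the strict margin in \eqref{eq:thm2_2} leaves enough slack for a Stone--Weierstrass polynomial approximation of the continuous certificate to remain feasible at a finite relaxation degree. Ensuring that recursive feasibility survives the P-satz relaxation, rather than only the exact dual, is where the analysis will require the most care.
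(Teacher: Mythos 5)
Your proposal is correct and takes essentially the same route as the paper, whose entire proof is the two-line observation that the true $(A,B)$ lies in $\mathcal{C}_k$ for all $k$, so satisfaction of \eqref{eq:tau}, \eqref{eq:p}, \eqref{eq:thm2_1}, \eqref{eq:sos1}, \eqref{eq:sos2} implies the true system satisfies \eqref{eq:DLMI}--\eqref{eq:p} and Theorem~\ref{thm:model_based} applies. Your write-up is in fact more thorough than the paper's: the recursive-feasibility argument via $\mathcal{C}_{k+1}\subseteq\mathcal{C}_k$ and, especially, the caveat that feasibility must survive the passage from continuous multipliers to a finite-degree polynomial (SOS) certificate are genuine points the paper leaves implicit.
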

\begin{proof}
    The proof follows from Theorem~\ref{thm:model_based} by noting that the unknown LTI system is in the consistency set $\mathcal{C}_k$ for all $k$ and thus satisfaction of \eqref{eq:tau},\eqref{eq:p},\eqref{eq:thm2_1},\eqref{eq:sos1},\eqref{eq:sos2} implies that $(A,B)$ satisfies \eqref{eq:DLMI}-\eqref{eq:p}.
\end{proof}

\begin{remark} 
     Leveraging the past execution data  potentially  shrinks  the consistency set, since
     $\mathcal{C}_{k+1} \subseteq
     \mathcal{C}_{k}$. In turn, this can lead to tighter bounds on the worst-case cost \cite{zheng2025receding}.
\end{remark}

\subsection{Complexity}

For a fixed $\tau$, solving problem \eqref{eq:dd_lqr} involves $2(N+N_d)n$ polynomial multipliers in the indeterminate variable $z\in\R^{n_z = 5n+4}$, and each polynomial multiplier involves 2 Gram matrices with relaxation order $r$ and $r-1$ have size of $\binom{n_z+r}{r}$ and $\binom{n_z+r-1}{r-1}$, respectively. Thus, the \ac{SDP} program has nearly $4(N+N_d)n\binom{n_z+r}{r} ^ 2$ variables and using an interior-point method, the total complexity is roughly $\mathcal{O}\left((N+N_d)^3n^{6r+3}\right)$. For comparison, using Scherer's P-satz will involve polynomials in $\mathcal{O}(n(n+1))$ indeterminates
(the elements of $[A, B]$) and thus computational complexity will scale as $\mathcal{O}((N+N_d)^3n^{12r})$.

\section{EXPERIMENTS} \label{sec:experiments}
In this section, we present the simulation results of the proposed robust data-driven receding-horizon input-constrained LQR method. The experiments are conducted in MATLAB 2024a, implemented using YALMIP \cite{Lofberg2004} and solved with MOSEK \cite{mosek}. The code for the experiments is publicly available at 
\url{https://github.com/J-mzz/ddlqr-rh}.

\begin{example}
    Consider a discrete-time linear system 
    \beq\nonumber
    A = \begin{bmatrix}
        0 & -0.99 \\ 0.99 & 0
    \end{bmatrix},\quad  
    B = \begin{bmatrix}
        0 \\ 1
    \end{bmatrix},
    \eeq
    with the control constraint $|u| \leq 1$ and noise level $\epsilon=0.1$.
\end{example}

We initialize the experiments with 5 noisy measurements, randomly generated, and compare the proposed algorithm with the data-driven \ac{LQR} baseline in \cite{dai2023data}. The results in Fig.~\ref{fig:eg1} demonstrate the ability of the proposed method in enforcing input constraints. Specifically,  Fig.~\ref{fig:eg1_traj} illustrates the 10-steps closed-loop trajectories under process noise, with the controller from Algorithm~\ref{alg:ddc} (orange) and the baseline (blue). The corresponding control inputs are shown in Fig.~\ref{fig:eg1_control}. Unlike the baseline, the proposed method strictly enforces the control constraint, ensuring robust feasibility at the cost of slower convergence and a higher upper bound on the cost.

\begin{figure}[ht]
    \centering
    \begin{subfigure}[bt]{0.49\columnwidth}
        \centering
        \includegraphics[width=\linewidth]{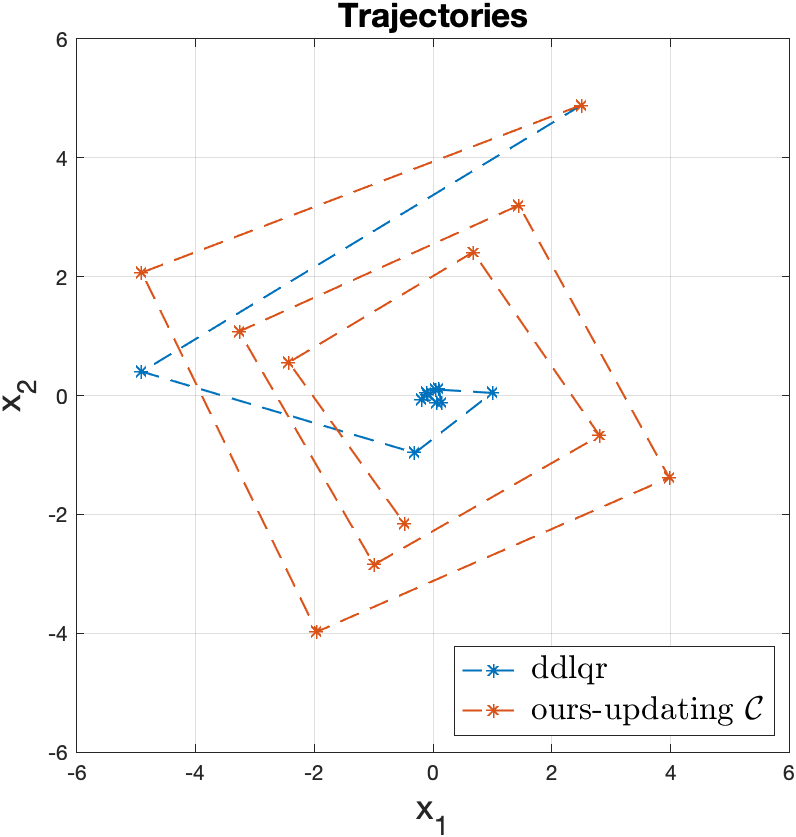}
        \caption{Trajectory}
        \label{fig:eg1_traj}
    \end{subfigure}
    \hfill
    \begin{subfigure}[bt]{0.49\columnwidth}
        \centering
        \includegraphics[width=\linewidth]{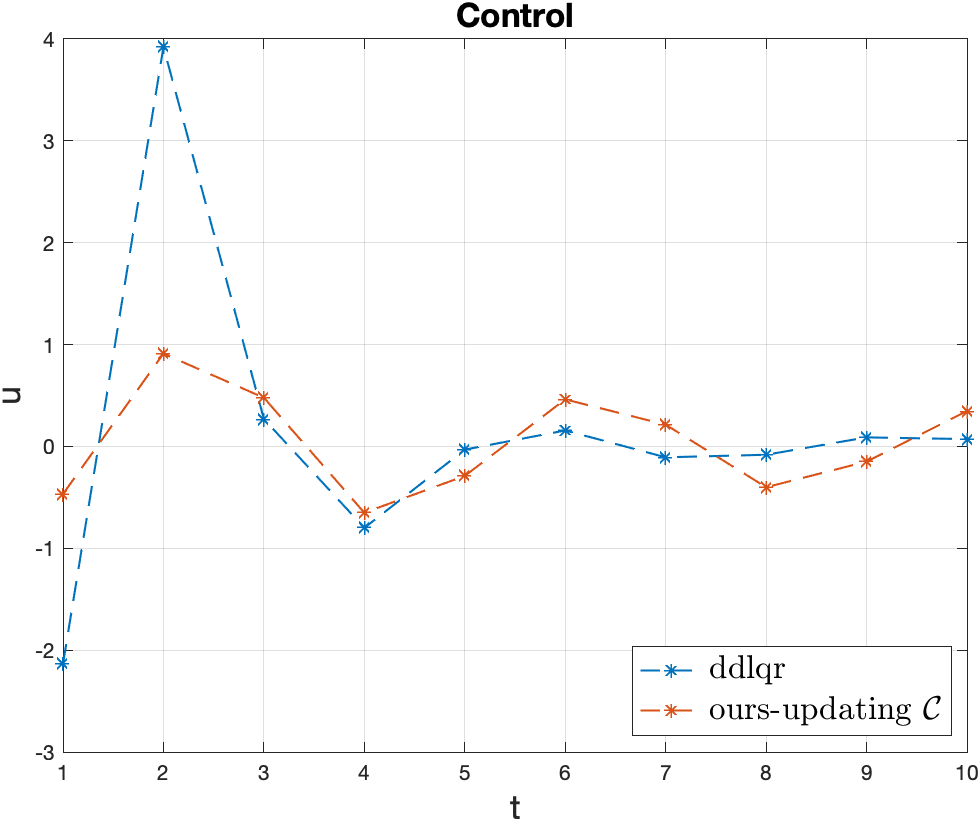}
        \caption{Control}
        \label{fig:eg1_control}
    \end{subfigure}
    \caption{Data-driven LQR with input constraints}
    \label{fig:eg1}
\end{figure}

Further experiments demonstrate the benefits of updating the consistency set with execution data. Specifically, Fig.~\ref{fig:eg1_bound} shows the decrease of the worst-case upper bound along the trajectory as more execution data are incorporated in a receding-horizon manner. The worst-case LQR cost obtained using the proposed method with a fixed consistency set (yellow) and an updated consistency set (orange) are compared, together with the baseline (blue). Fig.~\ref{fig:eg1_consistency} depicts the shrinkage of the consistency set, projected onto the first three PCA components, after incorporating 10 additional executions. These results confirm that receding-horizon updates tighten the upper bound and reduces conservatism.

\begin{figure}[ht]
    \centering
    \begin{subfigure}[bt]{0.49\columnwidth}
        \centering
        \includegraphics[width=\linewidth]{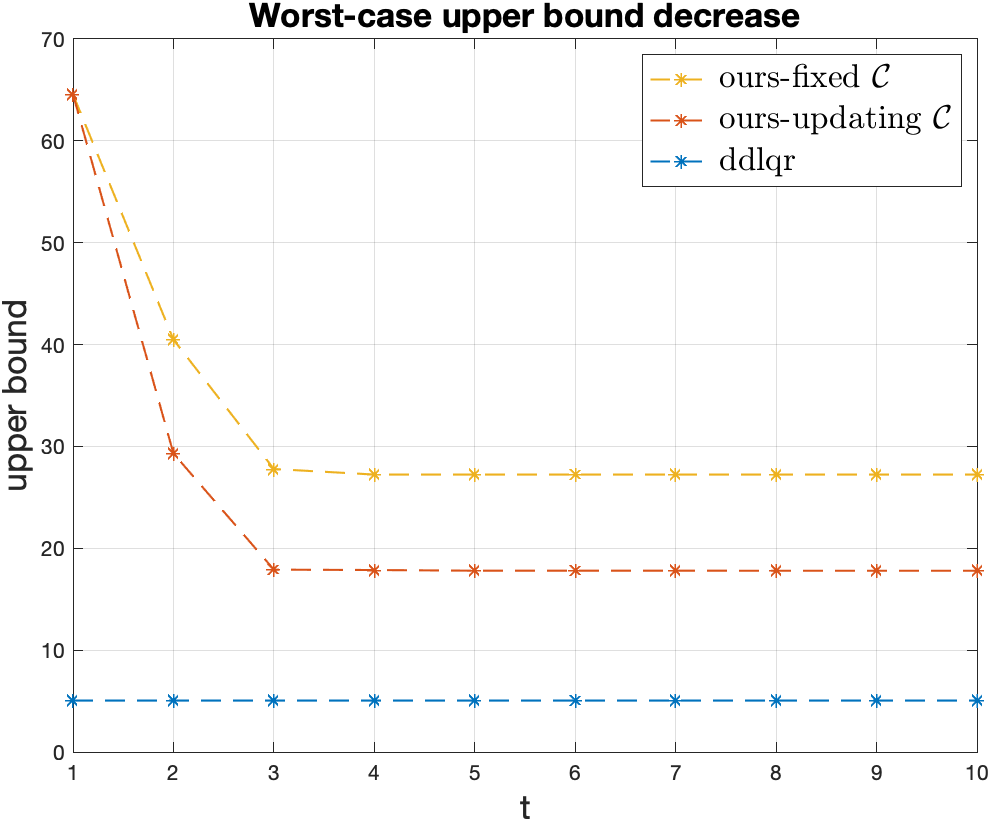}
        \caption{Upper bound}
        \label{fig:eg1_bound}
    \end{subfigure}
    \hfill
    \begin{subfigure}[bt]{0.49\columnwidth}
        \centering
        \includegraphics[width=\linewidth]{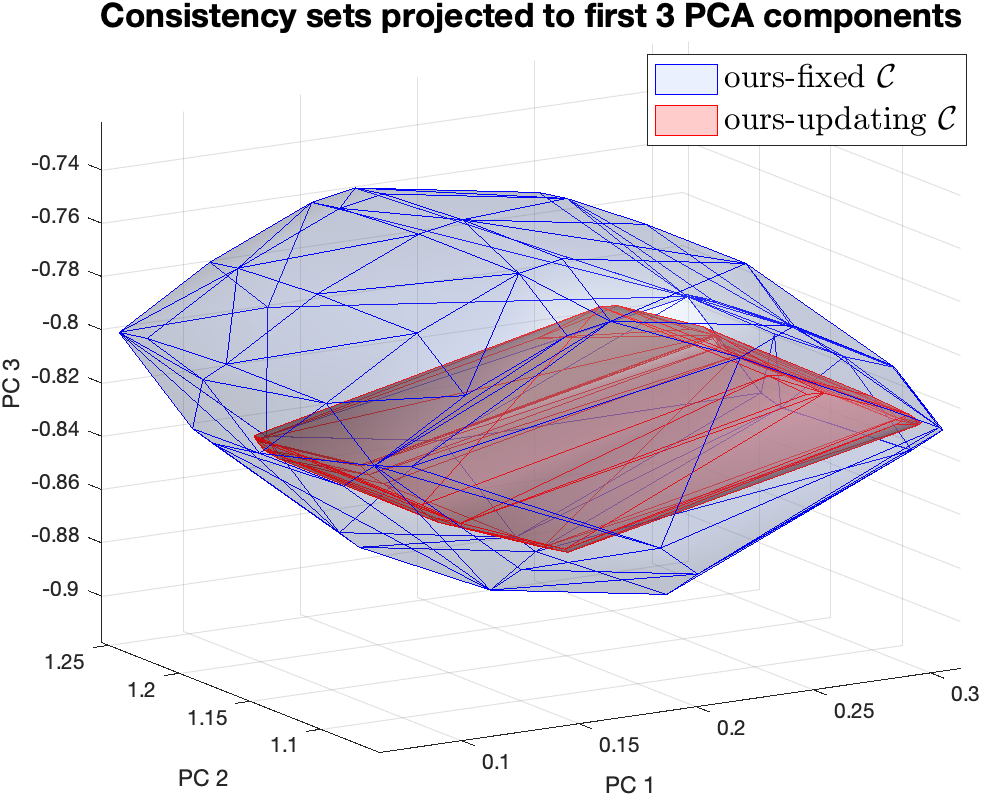}
        \caption{Consistency set}
        \label{fig:eg1_consistency}
    \end{subfigure}
    \caption{Data-driven LQR with receding-horizon}
    \label{fig:eg1_2}
\end{figure}

\section{CONCLUSIONS} \label{sec:conclusions}

This letter presents a robust data-driven \ac{LQR} framework that incorporates input constraints and leverages execution data to tighten the worst-case upper bound. Simulation results illustrate the effectiveness of the proposed algorithms from feasible control actions and the decrease of the bound along the flow. One limitation of the proposed method is the line search on $\tau$, however, it can be efficiently implemented by a proper gridding over the range of $\tau$ with the parallel computing technique. Future work may focus on extending the current method for multi-input systems with polytopic or more general control bound, incorporating state constraints in the framework, or more efficient methods to compute $\tau$.

\addtolength{\textheight}{-12cm}   








\bibliographystyle{IEEEtran} 
\bibliography{reference}



\end{document}